\def\MODE{3}
\pgfplotsset{compat=1.14}
\newcommand{\df}{\nabla\!f}
\newcommand{\dg}{\nabla\!g}
\let\oldbibliography\thebibliography
\renewcommand{\thebibliography}[1]{\oldbibliography{#1}
\setlength{\itemsep}{1pt}} 
\begin{document}

\title{A Robust Accelerated Optimization Algorithm\\for Strongly Convex Functions}

\if\MODE1
\author{Saman Cyrus$^{1,2}$, Bin Hu$^{2}$, Bryan Van Scoy$^{2}$, Laurent Lessard$^{1,2}$ 
\thanks{This work was supported by National Science Foundation (CISE/CCF). Award Number 1656951.}
\thanks{$^{1}$ The authors are with the Department of Electrical and Computer Engineering,
        University of Wisconsin--Madison, Madison, WI 53706, USA.
        }%
\thanks{$^{2}$ The authors are with the Optimization Group at the Wisconsin Institute for Discovery.
        Email: {\tt\small \{cyrus2,bhu38, vanscoy,laurent.lessard\}@wisc.edu}}%
}

\markboth{IEEE Transactions on Automatic Control}%
{}
\fi
\if\MODE4
\author{Saman~Cyrus\footnotemark[1]$^,$\footnotemark[2] \and
        Bin~Hu\footnotemark[2] \and 
        Bryan~Van~Scoy\footnotemark[2] \and
        Laurent~Lessard\footnotemark[1]$^,$\footnotemark[2]}
\note{}
\fi
\if\MODE3
\author{Saman~Cyrus \and
	Bin~Hu \and 
	Bryan~Van~Scoy \and
	Laurent~Lessard}
\note{}
\fi

\maketitle

\if\MODE4
\footnotetext[1]{S.~Cyrus and L.~Lessard are with the Department of Electrical and Computer Engineering, University of Wisconsin--Madison.}
\footnotetext[2]{All authors are with the Wisconsin Institute for Discovery at the University of Wisconsin--Madison.
	Email addresses:\\ {\small\texttt{\{cyrus2,bhu38,vanscoy,laurent.lessard\}@wisc.edu}}}
\footnotetext[3]{This material is based upon work supported by the National Science Foundation under Grants No. 1656951 and 1750162.}
\fi

\begin{abstract}
This work proposes an accelerated first-order algorithm we call the Robust Momentum Method for optimizing smooth strongly convex functions. The algorithm has a single scalar parameter that can be tuned to trade off robustness to gradient noise versus worst-case convergence rate. At one extreme, the algorithm is faster than Nesterov's Fast Gradient Method by a constant factor but more fragile to noise. At the other extreme, the algorithm reduces to the Gradient Method and is very robust to noise. The algorithm design technique is inspired by methods from classical control theory and the resulting algorithm has a simple analytical form. Algorithm performance is verified on a series of numerical simulations in both noise-free and relative gradient noise cases.
\end{abstract}

\section{Introduction}

Consider the unconstrained optimization problem
\begin{equation}
\min_{x\in \mathbb{R}^n} f(x) \label{eq:optproblem}
\end{equation}
where $f:\R^n\rightarrow \R$ is $L$-smooth and $m$-strongly convex. The strong convexity of $f$ guarantees that there exists a unique minimizer $x_\star$ satisfying $\df(x_\star)=0$. First-order methods are widely used for solving~\eqref{eq:optproblem} when the Hessian is prohibitively expensive to compute, e.g., when the problem dimension is large.
A simple first-order algorithm for solving~\eqref{eq:optproblem} is the Gradient Method (GM),
\begin{equation*}
x_{k+1} = x_k - \alpha \df(x_k), \qquad x_0 \in \R^n.
\end{equation*}
For smooth and strongly convex $f$, the GM with a well-chosen stepsize converges linearly to the optimizer \cite{polyak1987introduction}. That is, for some $c \ge 0$ and $\rho \in [0,1)$, we have
\begin{equation*}
\| x_k - x_* \| \le c\, \rho^k \quad\text{for all }k\geq 0.
\end{equation*}
For example, the standard choice $\alpha=1/L$ leads to a linear rate $\rho=1-\frac{m}{L}$, while the choice $\alpha=\frac{2}{L+m}$ results in the improved linear rate $\rho=\frac{L-m}{L+m}$. 

The issue with the Gradient Method, however, is that the convergence rate is slow, especially for ill-conditioned problems where the ratio $\frac{L}{m}$ is large. A common method of accelerating convergence is to use \emph{momentum}. A well-established momentum algorithm for smooth and strongly convex $f$ is Nesterov's Fast Gradient Method\footnote{Also called Neterov's accelerated gradient method.}, (FGM)~\cite{nesterov2004introductory} described by the iteration
\begin{align*}
x_{k+1} &= y_k - \alpha \df(y_k), \qquad\quad x_0,x_{-1}\in\mathbb{R}^n \\
y_k &= x_k + \beta(x_k-x_{k-1}).
\end{align*}
The FGM tuned with $\alpha=\tfrac{1}{L}$ and $\beta = \tfrac{\sqrt{L}-\sqrt{m}}{\sqrt{L}+\sqrt{m}}$ converges with rate $\rho^2 < 1-\sqrt{m/L}$, which is faster than the GM rate\footnote{A numerical study in~\cite{lessard2016analysis} revealed that the standard rate bound for FGM derived in~\cite{nesterov2004introductory} is conservative. Nevertheless, the bound has a simple algebraic form and is asymptotically tight.}. The rate can be improved to $\rho = 1-\sqrt{m/L}$ using an accelerated algorithm called the Triple Momentum Method~\cite{van2018fastest}. This is the fastest known worst-case convergence rate for this class of problems.

Robustness issues arise naturally in many optimization problems. For example, achieving the above rates associated with each first-order method requires knowledge of $L$ and $m$, which may not be accurately accessible in practice. In addition, the gradient evaluation can be inexact for certain applications \cite{d2008smooth, schmidt2011, devolder2014first}. These issues motivate the need for accelerated first-order methods that are robust to underlying design assumptions. 

As observed in \cite[\S 5.2]{lessard2016analysis}, optimization algorithm design involves a tradeoff between performance and robustness. For example, consider stepsize tuning for the GM. Using $\alpha=\frac{2}{L+m}$ optimizes the convergence rate, but makes the algorithm fragile to gradient noise. The more conservative choice $\alpha=\frac{1}{L}$ results in slower convergence, but more robustness to noise. This is consistent with the intuition that a smaller stepsize can improve the algorithm's robustness at the price of degrading its performance. For momentum methods, exploiting the tradeoff between performance and robustness is less straightforward, since one has to tune multiple algorithm parameters in a coupled manner to achieve acceleration. This tradeoff is exploited in \cite{devolder2013intermediate} for first-order methods applied to smooth convex problems. In this work, we design a first-order method that exploits the tradeoff between robustness and performance for smooth strongly convex problems.

\paragraph{Notation.}
The set of functions that are $m$-strongly convex and $L$-smooth is denoted $\mathcal{F}(m,L)$. In particular, $f\in\mathcal{F}(m,L)$ if for all $x,y \in \mathbb{R}^n$,
\begin{equation*}
m \| x-y \|^2 \le \left( \df(x) - \df(y) \right)^\tp  (x-y) \le L \| x-y \|^2.
\end{equation*}
The condition ratio is defined as $\kappa \defeq L/m$.

\section{Main result}\label{sec:main}

\subsection{Robust Momentum Method}\label{sec:rmm}
Our proposed algorithm is parameterized by a scalar $\rho$ that represents the worst-case convergence rate of the algorithm in the noise-free case. Specifically, the iteration is governed by the following recursion with arbitrary initialization $x_0,x_{-1}\in\mathbb{R}^n$
\begin{subequations}\label{algo}
\begin{align}
x_{k+1} &= x_k + \beta (x_k - x_{k-1}) - \alpha \df(y_k), \label{algo1}\\
y_k &= x_k + \gamma (x_k - x_{k-1}).
\end{align}
\end{subequations}
where $\alpha$, $\beta$, and $\gamma$ depend directly on the parameter $\rho$ as
\begin{equation}\label{eq:params}
  \begin{gathered}
  \alpha = \frac{\kappa (1-\rho)^2 (1+\rho)}{L},\qquad
  \beta = \frac{\kappa  \rho ^3}{\kappa -1}, \\
  \gamma = \frac{\rho ^3}{(\kappa -1) (1-\rho)^2 (1+\rho)}.
  \end{gathered}
\end{equation}
We now state the key convergence property of the Robust Momentum Method in the noise-free case.

\begin{thm}
\label{thm:main}
Suppose $f\in\mathcal{F}(m,L)$ with $0<m\leq L$ and let $x_\star$ be the unique minimizer of $f$. Given the  parameter $\rho \in [1-1/\sqrt{\kappa},\,1-1/\kappa]$, the Robust Momentum Method~\eqref{algo} with parameter tuning~\eqref{eq:params} satisfies the bound
\begin{align}
\norm{x_k-x_\star} &\le c\,\rho^{k}\qquad\text{for }k\ge 1 \label{eq:bound_x}
\end{align}
where $c > 0$ is a constant that does not depend on $k$.
\end{thm}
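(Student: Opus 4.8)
My plan is to recast the analysis as a robust stability problem for a feedback interconnection and to certify the rate with a quadratic Lyapunov function, following the dissipativity/IQC methodology of \cite{lessard2016analysis}. First I would shift coordinates by $x_\star$ and write the iteration~\eqref{algo} as a linear time-invariant system driven by the gradient: with state $\xi_k \defeq \begin{bmatrix} x_k - x_\star \\ x_{k-1} - x_\star \end{bmatrix}$ and input $u_k \defeq \df(y_k)$, one obtains $\xi_{k+1} = A\,\xi_k + B\,u_k$ and $y_k - x_\star = C\,\xi_k$, where $A$, $B$, $C$ are the constant matrices read off from $(\alpha,\beta,\gamma)$. Since $\df(x_\star)=0$, the pair $\xi=0$, $u=0$ is an equilibrium, so proving~\eqref{eq:bound_x} amounts to showing this equilibrium is exponentially stable with rate $\rho$.

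Next I would encode the hypothesis $f\in\mathcal{F}(m,L)$ as a single quadratic constraint linking $y_k-x_\star$ and $u_k=\df(y_k)$. Using $\df(x_\star)=0$ together with co-coercivity for the class $\mathcal{F}(m,L)$ yields the sector bound
\begin{equation*}
\begin{bmatrix} y_k-x_\star \\ u_k \end{bmatrix}^{\!\tp}
\begin{bmatrix} -2mL & L+m \\ L+m & -2 \end{bmatrix}
\begin{bmatrix} y_k-x_\star \\ u_k \end{bmatrix} \ge 0,
\end{equation*}
which is the only property of $f$ I would use. Calling the symmetric matrix above $M$ and substituting $y_k-x_\star=C\,\xi_k$ turns this into a quadratic form in $(\xi_k,u_k)$.

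The core step is to find a storage function $V_k \defeq \xi_k^{\tp} P\,\xi_k$ with $P\succ 0$ and a multiplier $\lambda\ge 0$ satisfying the dissipation inequality $V_{k+1}-\rho^2 V_k \le \lambda\cdot(\text{sector term})$. Expanding $\xi_{k+1}=A\xi_k+Bu_k$, this is equivalent to the matrix inequality
\begin{equation*}
\begin{bmatrix} A^{\tp}PA-\rho^2 P & A^{\tp}PB \\ B^{\tp}PA & B^{\tp}PB \end{bmatrix}
+\lambda\,\Pi^{\tp}M\,\Pi \preceq 0,
\end{equation*}
where $\Pi$ sends $(\xi_k,u_k)$ to $(C\xi_k,u_k)$. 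If such $(P,\lambda)$ exist, then because the sector term is nonnegative we get $V_{k+1}\le\rho^2 V_k$, and telescoping gives $V_k\le\rho^{2k}V_0$; the bound~\eqref{eq:bound_x} then follows with $c=\sqrt{\lambda_{\max}(P)/\lambda_{\min}(P)}\,\norm{\xi_0}$ after passing from $V_k$ back to $\norm{x_k-x_\star}\le\norm{\xi_k}$.

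The main obstacle—and the place where the specific tuning~\eqref{eq:params} does its work—is exhibiting an explicit closed-form certificate $(P,\lambda)$ and verifying the inequality algebraically over the stated range $\rho\in[1-1/\sqrt{\kappa},\,1-1/\kappa]$. I expect the parameters in~\eqref{eq:params} to be reverse-engineered precisely so that a low-rank $P$ and a single scalar $\lambda$ make the left-hand side above not merely negative semidefinite but equal to the negative of an explicit outer product (a perfect square). Checking $P\succ 0$ and this negative-semidefinite factorization—most likely via a Schur complement together with direct substitution of $(\alpha,\beta,\gamma)$—is the bulk of the technical work; I anticipate that the two endpoints of the $\rho$-interval are exactly where the multiplier $\lambda$ or a Schur complement hits its feasibility boundary, which is why the admissible range is stated as it is.
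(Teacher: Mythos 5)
Your overall architecture---rewrite \eqref{algo} as a shifted LTI system in feedback with $\df$, impose a quadratic constraint coming from $f\in\mathcal{F}(m,L)$, and certify $V_{k+1}\le\rho^2 V_k$ with a closed-form certificate that the tuning \eqref{eq:params} renders tight---is indeed the paper's strategy. The genuine gap is in the constraint you choose. You use only the \emph{pointwise sector} inequality $(u_k-m(y_k-x_\star))^\tp\bigl(L(y_k-x_\star)-u_k\bigr)\ge 0$, a static condition relating $y_k$ and $u_k$ at a single time step. That condition is also satisfied by non-gradient monotone operators (e.g., linear maps with a large skew-symmetric part lying in the sector disk), for which rates of order $1-1/\sqrt{\kappa}$ are not attainable; correspondingly, the sector IQC with a quadratic Lyapunov function is too weak to certify acceleration (this is why the paper's Section~\ref{sec:controlinter} uses the sector IQC only for the Gradient Method and a Zames--Falb multiplier for the RMM). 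Your LMI would therefore be infeasible at and near the lower endpoint $\rho=1-1/\sqrt{\kappa}$, which is the interesting part of the claimed range. The paper instead uses the strictly stronger \emph{two-time-step} inequality of Lemma~\ref{lem:sequences}, Item~3: applying co-coercivity of $g(x)\defeq f(x)-f(x_\star)-\tfrac{m}{2}\norm{x-x_\star}^2$ between the consecutive points $y_k$ and $y_{k-1}$ gives
\begin{equation*}
(u_k-m\tilde y_k)^\tp\bigl(L(\tilde y_k-\rho^2\tilde y_{k-1})-(u_k-\rho^2 u_{k-1})\bigr)\ \ge\ q_k-\rho^2 q_{k-1},
\end{equation*}
with $q_k\defeq(L-m)g(y_k)-\tfrac12\norm{\dg(y_k)}^2\ge 0$. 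Combining this with the purely algebraic identity of Lemma~\ref{lem:identity} leads to the Lyapunov function $V_k=\lambda\norm{\tilde z_k}^2+q_{k-1}$, which contains the \emph{function value} $f(y_{k-1})$ and is not a quadratic form in the state. Both the cross-time gradient information and the function-value term are missing from your proposal, and without them the argument cannot close.

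A secondary issue: you correctly anticipate a low-rank certificate, but then your constant $c=\sqrt{\lambda_{\max}(P)/\lambda_{\min}(P)}\,\norm{\xi_0}$ is vacuous, since the quadratic part of the paper's certificate is the rank-one form $\lambda\norm{z_k}^2$ with $z_k=(1-\rho^2)^{-1}(x_k-\rho^2 x_{k-1})$, so $\lambda_{\min}(P)=0$. Decay of $V_k$ only bounds the combination $\tilde x_k-\rho^2\tilde x_{k-1}$; recovering $\norm{x_k-x_\star}\le c\,\rho^k$ requires the additional telescoping/geometric-series step \eqref{telescope}. Finally, your stated dissipation inequality $V_{k+1}-\rho^2V_k\le\lambda\cdot(\text{sector term})$ has the wrong sign to imply $V_{k+1}\le\rho^2V_k$; the LMI you display is the correct version, so this is presumably a typo.
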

The proof of Theorem~\ref{thm:main} is provided in Section~\ref{sec:MainProof}.
Theorem~\ref{thm:main} states that $\rho$ directly controls the worst-case convergence rate of the Robust Momentum Method. We will see in Section~\ref{sec:controlinter} that although increasing $\rho$ makes the algorithm slower, it also makes it more robust to gradient noise. In particular,
\begin{itemize}
\item The minimum value is $\rho = 1-1/\sqrt{\kappa}$. This is the fastest achievable convergence rate and also leads to the most fragile algorithm. This choice recovers the Triple Momentum Method~\cite{van2018fastest}.
\item The maximum value is $\rho = 1-1/\kappa$. This is the slowest achievable convergence rate and also leads to the most robust algorithm. This choice recovers the Gradient Method with stepsize $\alpha=1/L$.

\end{itemize}
To see why this last case reduces to the Gradient Method, substitute $\rho = 1-1/\kappa$ into~\eqref{algo} and~\eqref{eq:params}. Then,~\eqref{algo1} reduces to $y_{k+1} = y_k - \tfrac{1}{L}\df(y_k)$.


\subsection{Convergence rate proof}
\label{sec:MainProof}

In this section, we derive a proof for Theorem \ref{thm:main}. The approach that follows is similar to the one used in~\cite{lessard2016analysis}, with one important difference. In addition to proving a rate bound as in~\cite{lessard2016analysis}, we also derive
a Lyapunov function that yields intuition for the algorithm's behavior and robustness properties.

\begin{prop}[Co-coercivity] \label{prop:co-coercivity}
Suppose $f:\R^n\to \R$ is convex and differentiable. Further suppose $f$ is $L$-smooth. Then for all $x,y \in \R^n$,
\[
f(y) \ge f(x) + \df(x)^\tp (y-x) + \frac{1}{2L}\norm{\df(y) - \df(x)}^2.
\]
\end{prop}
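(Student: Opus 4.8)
The plan is to reduce the claim to the standard descent lemma (the quadratic upper bound implied by $L$-smoothness) by shifting $f$ so that the reference point $x$ becomes a global minimizer. Concretely, I would introduce the auxiliary function
\[
  g(z) \defeq f(z) - \df(x)^\tp z.
\]
Subtracting a linear term preserves both convexity and $L$-smoothness, so $g$ is convex and $L$-smooth, and its gradient is $\dg(z) = \df(z) - \df(x)$. In particular $\dg(x)=0$, so first-order optimality together with convexity guarantees that $x$ is a \emph{global} minimizer of $g$. This is the one genuinely structural step; everything afterward is mechanical.

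Next I would apply the descent lemma to $g$. The quadratic upper bound gives, for any $z$,
\[
  g\bigl(z - \tfrac1L \dg(z)\bigr) \le g(z) - \tfrac{1}{2L}\norm{\dg(z)}^2,
\]
which follows by substituting the gradient step $z - \tfrac1L\dg(z)$ into the bound $g(w) \le g(z) + \dg(z)^\tp(w-z) + \tfrac{L}{2}\norm{w-z}^2$ and simplifying. Setting $z=y$ and using global minimality $g(x)\le g\bigl(y - \tfrac1L\dg(y)\bigr)$ yields
\[
  g(x) \le g(y) - \tfrac{1}{2L}\norm{\dg(y)}^2.
\]

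Finally, expanding $g(x)=f(x)-\df(x)^\tp x$, $g(y)=f(y)-\df(x)^\tp y$, and $\dg(y)=\df(y)-\df(x)$, and rearranging, gives exactly the asserted inequality. The only nonroutine ingredient is recognizing the shift that makes $x$ optimal for $g$; once that is in place, the descent lemma supplies the $\tfrac{1}{2L}$ coefficient automatically. The main obstacle to a fully self-contained argument is the descent lemma itself: if the quadratic upper bound is not taken as the working characterization of $L$-smoothness, I would first derive it from the Lipschitz-gradient property via $f(w)-f(z)=\int_0^1 \df(z+t(w-z))^\tp(w-z)\,dt$ and the Cauchy--Schwarz inequality.
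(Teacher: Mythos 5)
Your argument is correct. The paper itself states Proposition~\ref{prop:co-coercivity} without proof, treating it as a standard fact (it is Theorem~2.1.5 in \cite{nesterov2004introductory}), so there is no in-paper proof to compare against; your write-up is precisely the classical textbook argument. The linear shift $g(z) \defeq f(z) - \df(x)^\tp z$ does preserve convexity and $L$-smoothness, $\dg(x)=0$ makes $x$ a global minimizer of $g$ by convexity, and combining $g(x) \le g\bigl(y - \tfrac1L\dg(y)\bigr) \le g(y) - \tfrac{1}{2L}\norm{\dg(y)}^2$ with the expansion of $g$ yields the stated inequality after rearrangement. One small remark: given the paper's working definition of $L$-smoothness, namely $(\df(x)-\df(y))^\tp(x-y) \le L\norm{x-y}^2$, the descent lemma follows directly from the integral identity $f(w)-f(z)-\df(z)^\tp(w-z) = \int_0^1 \bigl(\df(z+t(w-z))-\df(z)\bigr)^\tp(w-z)\,\mathrm{d}t \le \tfrac{L}{2}\norm{w-z}^2$ without invoking Cauchy--Schwarz, which slightly streamlines the final ingredient you flag as the remaining obstacle.
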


The following lemma proves a key property of strongly convex functions. Parts of this result appear  in~\cite{lessard2016analysis} and we repeat them here for completeness.
\begin{lem}\label{lem:sequences}
Suppose $f\in\mathcal{F}(m,L)$. Let $x_\star$ be the unique minimizer of $f$ (i.e., $\df(x_\star)=0$). Define the function $g(x) \defeq f(x) - f(x_\star) - \frac{m}{2}\norm{x-x_\star}^2$. Given any sequence of points $\{y_k\} \subseteq \R^n$, 
\begin{enumerate}
\item If we define $q_k \defeq (L-m)g(y_k) - \tfrac{1}{2}\norm{\dg(y_k)}^2$, then
\[
q_k \ge 0\quad\text{for all }k.
\]
\item If we define $u_k \defeq \df(y_k)$ and $\tilde y_k \defeq y_k - x_\star$, then
\[
(u_k - m \tilde y_k )^\tp (L\tilde y_k - u_k) \ge q_k\quad\text{for all }k.
\]
\item Using the same definitions as above, the following inequality holds for any $0 \le \rho \le 1$,
\begin{multline*}
( u_k - m \tilde y_k )^\tp \bigl( L(\tilde y_k-\rho^2 \tilde y_{k-1}) \\
  - (u_k-\rho^2 u_{k-1}) \bigr) \ge q_k - \rho^2 q_{k-1} \quad\text{for all }k.
\end{multline*}
\end{enumerate}
\end{lem}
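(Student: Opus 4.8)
The plan is to reduce all three parts to the co-coercivity inequality of Proposition~\ref{prop:co-coercivity} applied to the auxiliary function $g$. The first thing I would record are the basic facts about $g$. Differentiating its definition gives $\dg(x) = \df(x) - m(x-x_\star)$, so the two-sided bound in the definition of $\mathcal{F}(m,L)$ yields $0 \le (\dg(x)-\dg(y))^\tp(x-y) \le (L-m)\norm{x-y}^2$ for all $x,y$; hence $g$ is convex and $(L-m)$-smooth. Moreover $\dg(x_\star)=\df(x_\star)=0$ and $g(x_\star)=0$, so $x_\star$ minimizes $g$. With these identities Proposition~\ref{prop:co-coercivity} applies to $g$, and I will also use the two substitutions $u_k-m\tilde y_k = \dg(y_k)$ and $L\tilde y_k - u_k = (L-m)\tilde y_k - \dg(y_k)$ throughout.

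For Part~1 I would apply co-coercivity to $g$ with the pair $(x_\star, y_k)$. Since $g(x_\star)=0$ and $\dg(x_\star)=0$, the inequality collapses to $g(y_k) \ge \tfrac{1}{2(L-m)}\norm{\dg(y_k)}^2$, which is exactly $q_k \ge 0$ after multiplying by $L-m$. For Part~2, substituting the two identities above shows that the left-hand side equals $(L-m)\,\dg(y_k)^\tp\tilde y_k - \norm{\dg(y_k)}^2$, so that subtracting $q_k$ leaves precisely the co-coercivity slack of the pair $(y_k, x_\star)$, which is nonnegative. I would name this slack $S_1 \ge 0$ and keep it for reuse.

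Part~3 is the main obstacle and requires combining \emph{two} instances of co-coercivity with carefully chosen nonnegative weights. Expanding both sides with the same substitutions turns the difference (left minus right) into a quadratic form $\Delta$ in $\tilde y_k,\tilde y_{k-1},\dg(y_k),\dg(y_{k-1})$ and the scalars $g(y_k),g(y_{k-1})$. I claim that $\Delta$ decomposes exactly as $\Delta = (1-\rho^2)\,S_1 + \rho^2(L-m)\,S_2$, where $S_1\ge 0$ is the slack from Part~2 and $S_2\ge 0$ is the co-coercivity slack of the pair $(y_k,y_{k-1})$ taken in the order $x=y_k$, $y=y_{k-1}$, i.e. $S_2 = g(y_{k-1})-g(y_k)-\dg(y_k)^\tp(\tilde y_{k-1}-\tilde y_k)-\tfrac{1}{2(L-m)}\norm{\dg(y_{k-1})-\dg(y_k)}^2$. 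Verifying this identity is the crux: I would expand both slacks and match the coefficient of every monomial, in particular the cross terms $\dg(y_k)^\tp\dg(y_{k-1})$ and $\tilde y_{k-1}^\tp\dg(y_k)$, against those in $\Delta$. Once the identity is confirmed, the conclusion is immediate, since for $0\le\rho\le 1$ both weights $1-\rho^2$ and $\rho^2(L-m)$ are nonnegative and both slacks are nonnegative, so $\Delta\ge 0$.

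The only real difficulty I anticipate is bookkeeping: guessing that exactly these two co-coercivity pairs appear (and not, e.g., the reversed pair $(y_{k-1},y_k)$ or the pair $(y_{k-1},x_\star)$), and with these specific weights. The naive route — bounding the index-$k$ term by Part~2 and handling the index-$(k-1)$ term separately — fails, because the cross term $\dg(y_k)^\tp\bigl(L\tilde y_{k-1}-u_{k-1}\bigr)$ mixes the two iterates and is not sign-definite; this is precisely what forces the inter-iterate inequality $S_2$ into the combination.
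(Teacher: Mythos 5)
Your proposal is correct and follows essentially the same route as the paper: establish that $g$ is convex and $(L-m)$-smooth with minimizer $x_\star$, apply Proposition~\ref{prop:co-coercivity} to the pairs $(x_\star,y_k)$, $(y_k,x_\star)$, and $(y_k,y_{k-1})$ for Parts~1, 2, and 3 respectively, and obtain Part~3 as the convex combination with weights $1-\rho^2$ and $\rho^2$ of the Part~2 inequality and the inter-iterate ($\rho=1$) inequality. Your slack-decomposition identity $\Delta=(1-\rho^2)S_1+\rho^2(L-m)S_2$ is exactly the algebra the paper performs (the paper phrases it as summing the two inequalities rather than matching monomials), and your choice of ordered pair $(y_k,y_{k-1})$ matches the paper's substitution $(f,x,y)\mapsto(g,y_k,y_{k-1})$.
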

\begin{proof}
By the definition of strong convexity, $g$ is convex and $(L-m)$-smooth. Moreover, $g(y) \ge g(x_\star) = 0$ for all $y\in\R^n$. Item~1 follows from applying Proposition~\ref{prop:co-coercivity} with $(f,x,y) \mapsto (g,x_\star,y_k)$. For Item~2, note that $u_k = \df(y_k) = \dg(y_k)+m\tilde y_k$. We have
\begin{align*}
(u_k-m\tilde y_k)^\tp (L\tilde y_k-u_k)
&=\dg(y_k)^\tp\bigl((L\!-\!m) \tilde y_k \!-\! \dg(y_k)\bigr) \\
&\ge (L-m) g(y_k) - \tfrac{1}{2}\|\dg(y_k) \|^2 \\
&= q_k
\end{align*}
where the inequality follows from applying Proposition~\ref{prop:co-coercivity} with $(f,x,y)\mapsto (g,y_k,x_\star)$. To prove Item~3, begin with the case $\rho=1$. Using a similar argument to the one used to prove Item~2,
\begin{align*}
&(u_k - m\tilde y_k)^\tp \bigl( L(\tilde y_k-\tilde y_{k-1}) - (u_k-u_{k-1}) \bigr) \\
&=\dg(y_k)^\tp \bigl((L\!-\!m) (\tilde y_k-\tilde y_{k-1}) - (\dg(y_k)-\dg(y_{k-1}) )\bigr) \\
&\ge q_k - q_{k-1}
\end{align*}
where the inequality follows from applying Proposition~\ref{prop:co-coercivity} with $(f,x,y)\mapsto (g,y_k,y_{k-1})$. By combining the two previous results, we have
\begin{align*}
&(u_k - m \tilde y_k )^\tp \bigl( L(\tilde y_k-\rho^2 \tilde y_{k-1}) - (u_k-\rho^2 u_{k-1}) \bigr) \\
&= (1-\rho^2)(u_k - m \tilde y_k )^\tp \bigl( L\tilde y_k - u_k) \\
&\qquad\quad + \rho^2 (u_k - m \tilde y_k )^\tp \bigl( L(\tilde y_k -\tilde y_{k-1}) - (u_k-u_{k-1}) \bigr) \\
&\ge (1-\rho^2)q_k + \rho^2(q_k - q_{k-1}) \\
&= q_k - \rho^2 q_{k-1}
\end{align*}
and this completes the proof of Item~3.
\end{proof}

Our next lemma provides a key algebraic property of the Robust Momentum Method~\eqref{algo}. This result makes no assumptions about $f$.

\begin{lem}\label{lem:identity}
Suppose $\{u_k,x_k,y_k\}$ is any sequence of vectors satisfying the constraints
\begin{equation}\label{eq:constr1}
  \bmat{x_{k+1} \\ y_k } =
  \bmat{ 1+\beta  & -\beta  & -\alpha \\
         1+\gamma & -\gamma & 0 }
  \bmat{ x_k \\ x_{k-1} \\ u_k }
  \quad\text{for }k\ge 0
\end{equation}
where $(\alpha,\beta,\gamma)$ are given by~\eqref{eq:params}, and thus depend on the parameters $0<m\le L$, $\kappa \defeq L/m$, and $\rho\in(0,1)$. Define
$z_k \defeq (1-\rho^2)^{-1}\left(x_k - \rho^2 x_{k-1}\right)$ for $k \ge 0$.
Then the following algebraic identity holds for $k\ge 1$,
\begin{multline}\label{eq:dissipation}
  (u_k - m y_k )^\tp \bigl( L(y_k-\rho^2 y_{k-1}) - (u_k-\rho^2 u_{k-1}) \bigr) \\
  + \lambda \left( \norm{z_{k+1}}^2 - \rho^2 \norm{z_k}^2\right) + \nu \norm{ u_k - m y_k}^2 = 0
\end{multline}
where the constants $\lambda$ and $\nu$ are defined as
\begin{align}
  \lambda &\defeq \frac{m^2 \left(\kappa-\kappa  \rho ^2 -1\right)}{2\rho  (1-\rho )}
  \quad\text{and} \label{eq:deflambda}\\
  \nu &\defeq \frac{(1+\rho) \left(1-\kappa+2 \kappa  \rho -\kappa  \rho ^2\right)}{2\rho }. \label{eq:defc}
\end{align}
\end{lem}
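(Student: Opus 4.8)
The identity \eqref{eq:dissipation} is purely algebraic: it must hold for \emph{every} sequence $\{u_k,x_k,y_k\}$ satisfying the linear recursion \eqref{eq:constr1}, with no reference to any function $f$. The plan is therefore to treat the left-hand side as a quadratic form in the sequence vectors and to show that the recursion, together with the parameter choices \eqref{eq:params}, \eqref{eq:deflambda}, and \eqref{eq:defc}, forces it to vanish identically. Every term is a sum of inner products of the vectors involved, so the expression has the form $\sum_{i\le j} c_{ij}\,v_i^\tp v_j$ and vanishes for all vector values if and only if its symmetric coefficient matrix is zero---equivalently, if and only if the associated scalar polynomial vanishes identically. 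It thus suffices to verify a scalar polynomial identity, treating each vector as a scalar indeterminate.

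First I would reduce to a minimal set of free variables. The recursion \eqref{eq:constr1} at indices $k-1$ and $k$ expresses $x_k$ and $x_{k+1}$---and hence $y_k=(1+\gamma)x_k-\gamma x_{k-1}$, $y_{k-1}=(1+\gamma)x_{k-1}-\gamma x_{k-2}$, and $z_k,z_{k+1}$---as explicit linear combinations of the four independent variables $x_{k-1},x_{k-2},u_{k-1},u_k$. Substituting these into \eqref{eq:dissipation} yields a homogeneous quadratic form in those four variables whose ten coefficients are explicit rational functions of $\rho$ and $\kappa$. The identity is then equivalent to the vanishing of all ten coefficients, which I would check by substituting \eqref{eq:params}, \eqref{eq:deflambda}, and \eqref{eq:defc} and clearing denominators.

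To keep the bookkeeping tractable, I would first record the linear identity
\[
  z_{k+1} = \rho\,z_k - \tfrac{1-\rho}{m}\bigl(u_k - m y_k\bigr),
\]
which follows by inserting $x_{k+1}$ from \eqref{eq:constr1} into the definition of $z_{k+1}$ and simplifying using \eqref{eq:params} (in particular $\alpha=(1-\rho)^2(1+\rho)/m$ and $\beta=\kappa\rho^3/(\kappa-1)$). Writing $w_k\defeq u_k-m y_k$, this collapses the storage difference to $\lambda(\norm{z_{k+1}}^2-\rho^2\norm{z_k}^2)=-\tfrac{2\lambda\rho(1-\rho)}{m}\,z_k^\tp w_k+\tfrac{\lambda(1-\rho)^2}{m^2}\norm{w_k}^2$, leaving only the supply term together with the cross term $z_k^\tp w_k$ and the square $\norm{w_k}^2$ to be balanced. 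The constants in \eqref{eq:deflambda} and \eqref{eq:defc} are precisely the unique values for which this residual form---and hence all ten coefficients---vanish.

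The main obstacle is entirely computational rather than conceptual: the coefficients are rational in $\rho$ and $\kappa$, and several cross terms---in particular those involving the past data $x_{k-2}$ and $u_{k-1}$---must cancel simultaneously. The intermediate identity for $z_{k+1}$ reduces, but does not eliminate, this burden, so I would expect to lean on the precise algebraic structure of $\alpha,\beta,\gamma$ to confirm that every coefficient is identically zero.
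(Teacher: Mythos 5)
Your proposal is correct and follows essentially the same route as the paper: the paper likewise proves the lemma by using \eqref{eq:constr1} to express $z_{k+1}$, $z_k$, $y_k$, $y_{k-1}$, $u_k$, $u_{k-1}$ as linear functions of four free variables (it uses $x_k,x_{k-1},x_{k-2},u_k$, inverting the recursion to get $u_{k-1}=\alpha^{-1}((1+\beta)x_{k-1}-\beta x_{k-2}-x_k)$, whereas you use $x_{k-1},x_{k-2},u_{k-1},u_k$) and then verifying that the resulting quadratic form vanishes identically. Your intermediate identity $z_{k+1}=\rho z_k-\tfrac{1-\rho}{m}(u_k-my_k)$ is correct and is a useful bookkeeping refinement, but it does not change the nature of the argument.
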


\begin{proof}
The algebraic identity may be verified by direct substitution of~\eqref{eq:params}, \eqref{eq:constr1}, \eqref{eq:deflambda}, and \eqref{eq:defc} into~\eqref{eq:dissipation}. Specifically, the constraints~\eqref{eq:constr1} allow us to express $z_{k+1}$, $z_k$, $y_k$, $y_{k-1}$, $u_k$, and $u_{k-1}$ as linear functions of $x_k$, $x_{k-1}$, $x_{k-2}$, and $u_k$. Upon doing so, the resulting expression becomes identically zero. To express $u_{k-1}$ as required, rearrange the first equation of~\eqref{eq:constr1} to obtain the expression $u_{k-1}={\alpha}^{-1}((1+\beta)x_{k-1}-\beta x_{k-2}-x_k)$.
\end{proof}

The algebraic identity~\eqref{eq:dissipation} has three main terms. We will see how each serves a role in explaining the convergence and robustness properties of our algorithm. We are now ready to prove Theorem \ref{thm:main}.

\paragraph{Proof of Theorem \ref{thm:main}.}
Choose $x_0$ and $x_{-1}$ arbitrarily and consider the sequence $\{u_k,x_k,y_k,z_k\}$ defined by setting $u_k \defeq \df(y_k)$ and propagating for all $k\ge 0$ using~\eqref{eq:constr1}. This sequence is precisely a trajectory of our algorithm. Let $x_\star$ be the unique minimizer of $f$. Define the shifted sequences $\tilde x_k \defeq x_k - x_\star$, $\tilde y_k \defeq y_k - x_\star$, and $\tilde z_k \defeq z_k - x_\star$ where $z_k$ is defined in Lemma~\ref{lem:identity}. Note that the constraints~\eqref{eq:constr1} still hold when we use the shifted sequence $\{u_k,\tilde x_k,\tilde y_k,\tilde z_k\}$. Applying Lemma~\ref{lem:identity}  with Item~3 of Lemma~\ref{lem:sequences}, we conclude that for $k\ge 1$,
\begin{multline}\label{eq:diss1}
\lambda (\norm{\tilde z_{k+1}}^2 - \rho^2 \norm{\tilde z_k}^2) + (q_k - \rho^2 q_{k-1}) \\
+ \nu\, \norm{ u_k - m \tilde y_k}^2 \le 0,
\end{multline}
where $\lambda$ and $\nu$ are defined in \eqref{eq:deflambda}--\eqref{eq:defc}.
When $1-1/\sqrt{\kappa} \le \rho \le 1-1/\kappa$, we have $mL \ge \lambda \ge \tfrac{1}{2}mL$ and $0 \le \nu \le 1-\tfrac{1}{2\kappa}$. As we increase $\rho$, the parameter $\lambda$ decreases monotonically while $\nu$ increases monotonically.
Define the sequence $\{V_k\}$ by $V_k \defeq \lambda \norm{\tilde z_k}^2 + q_{k-1}$. If we choose $\rho$ in the interval specified above, then $\nu\ge 0$ and $\lambda > 0$. Since $q_k\ge 0$, $V_k$ can serve as a Lyapunov function. In particular, it follows from~\eqref{eq:diss1} that
\begin{equation}\label{lyap}
V_{k+1} \le \rho^2\, V_k\qquad\text{for }k\ge 1.
\end{equation}
Iterating this relationship, we find that $V_{k+1} \le \rho^{2k}\, V_1$. The reason we do not iterate down to zero is because $V_k$ is not defined at $k=0$. Substituting the definitions and simplifying, we obtain the bound
\begin{equation}\label{eq:bound_z}
\norm{\tilde z_{k+1}} \le \rho^{k} \sqrt{ \norm{\tilde z_1}^2 + \lambda^{-1}q_0}\qquad\text{for }k\ge 1.
\end{equation}
The bound~\eqref{eq:bound_z} therefore captures two effects. As we increase~$\rho$, the linear rate~$\rho^k$ becomes slower and the constant factor in the rate bound also grows.

Next, we show that $\{\tilde x_k\}$ goes to zero at the same rate~$\rho^k$, but with different constant factors. Note that because $\tilde z_k =(1-\rho^2)^{-1}\left(\tilde x_k - \rho^2 \tilde x_{k-1}\right)$, we can form the telescoping sum
\begin{equation}\label{telescope}
\tilde x_k = \rho^{2(k-1)} \tilde x_{-1} + (1-\rho^2) \sum_{t=0}^{k-1} \rho^{2(k-t)} \tilde z_t
\quad\text{for }k\ge 0.
\end{equation}
Taking the norm of both sides of~\eqref{telescope}, applying the triangle inequality, and substituting~\eqref{eq:bound_z}, we obtain a geometric series. Upon simplification, we find that $\norm{\tilde x_k}$ is bounded above by a constant times $\rho^k$, as required.

\section{Control design interpretations}\label{sec:controlinter}

In this section, we cast the problem of algorithm analysis as a robust control problem. Specifically, we can view the problem of algorithm analysis as being equivalent to solving a Lur'e problem~\cite{lur1944theory}. The Lur'e setup is illustrated in Figure~\ref{fig:Interconnection}, where a linear dynamical system $G$~\eqref{eq:optalg} is in feedback with a static nonlinearity $\phi$.
\begin{figure}[htb]
	\centering
	\begin{minipage}{0.4\linewidth}
		\centering
		\tikzsetnextfilename{block_diagram}
		\begin{tikzpicture}[thick]
		\tikzstyle{block}=[draw,rectangle,inner sep=2mm,minimum width=0.9cm,minimum height=0.8cm]
		\tikzstyle{link}=[->,>=latex]
		\node (G)    [block] at (0,0) {$G$};
		\node (phi)  [block] at (0,-1) {$\phi$};
		\draw [link] (G.east) -- +(.6,0) |- node[auto,pos=0.25] {$y$} (phi);
		\draw [link] (phi.west) -- +(-.6,0) |- node[auto,pos=0.25] {$u\vphantom{y}$} (G);
		\end{tikzpicture}
	\end{minipage}%
	\begin{minipage}{0.6\linewidth}
		\begin{subequations} \label{eq:optalg}
			\begin{align}
			\xi_{k+1} &= A \xi_k + B u_k, \\
			y_k     &= C  \xi_k, \\
			u_k     &= \phi( y_k).
			\end{align}
		\end{subequations}
	\end{minipage}
	\caption{Feedback interconnection of a linear system $G$ with a troublesome (nonlinear or uncertain) component $\phi$. We use the positive feedback convention in this block diagram.}
	\label{fig:Interconnection}
\end{figure}
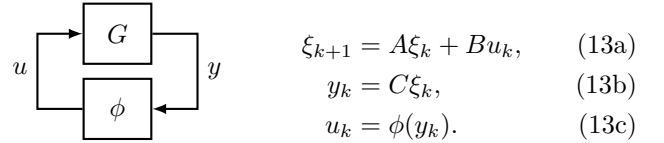

The Robust Momentum Method (as well as the Fast Gradient Method and ordinary Gradient Method) can be written in this way by setting $\phi = \df$ and choosing $A$, $B$, and $C$ appropriately. For example, the Robust Momentum Method~\eqref{algo} is given by
\begin{align*}
A &= \bmat{1+\beta & -\beta\\ 1 & 0}, &
B &= \bmat{-\alpha \\ 0 }, &
C &= \bmat{1 + \gamma & -\gamma}.
\end{align*}
Here, we shifted all signals so they are measured relative to the steady-state value $x_\star$ and therefore assumed that $\df(0)=0$. We also assumed without loss of generality that $u_k$ and $y_k$ are scalars. This interpretation was used in~\cite{lessard2016analysis,hu17a,hu17b} to provide a unified analysis framework.

Traditionally, Lur'e systems were analyzed in the frequency domain rather than the time domain. For the case of the Robust Momentum Method, the (discrete-time) transfer function of the linear block is given by
\begin{equation} \label{eq:RMMXfun}
G(z) = -\alpha \frac{(1 + \gamma) z - \gamma}{(z-1)(z-\beta)}.
\end{equation}
It was observed in Section~\ref{sec:rmm} that the Robust Momentum Method becomes the Gradient Method if $\rho = 1-1/\kappa$. This fact can be directly verified using the transfer function.
Substituting this $\rho$ and the parameter values~\eqref{eq:params} into~\eqref{eq:RMMXfun}, there is a pole-zero cancellation and we obtain $G(z) = \frac{-1}{L(z-1)}$, which is the transfer function for the Gradient Method with stepsize $\alpha=\frac{1}{L}$.

\paragraph{Frequency-domain condition.}
Continuing with the frequency-domain interpretation, Lur'e systems can be analyzed using the formalism of Integral Quadratic Constraints (IQCs)~\cite{megretski1997system}. To this end, the nonlinearity is characterized by a quadratic inequality that holds between its input and output
\[
\int_{|z|=1} \bmat{\hat y(z) \\ \hat u(z)}^* \Pi(z) \bmat{\hat y(z) \\ \hat u(z)}\mathrm{d}z \ge 0
\]
where $\hat y$ and $\hat u$ are the $z$-transforms of $\{y_k\}$ and $\{u_k\}$, respectively, and $\Pi(z)$ is a para-Hermitian matrix. For convenience, we use a loop-shifting transformation to move the nonlinearity $\phi = \df$ from the sector $(m,L)$ to the sector $(0,\kappa-1)$. We also scale the frequency variable $z$ by a factor of $\rho$ so that we can reduce the problem of certifying exponential stability (finding a linear rate) to that of certifying BIBO stability. This procedure is described in~\cite{boczar2015exponential}.

The nonlinearity of interest is sector-bounded and slope-restricted because it is the gradient of a function $g\in\mathcal{F}(0,\kappa-1)$. We may therefore represent the nonlinearity with a Zames--Falb IQC as in~\cite{boczar2015exponential}, leading to
\[
\Pi(z) \defeq 
\bmat{ 0 & (\kappa-1)(1-\rho^{2}\bar{z}^{-1})\\
(\kappa-1)(1-\rho^{2}z^{-1}) & -2 + \rho^{2}(z^{-1}+\bar{z}^{-1})}
.\]
The transformed transfer function is
\begin{equation} \label{eq:RMMXfun2}
\tilde G(z) = \frac{-\alpha m (1 + \gamma) z + \alpha m \gamma}{z^2 - (1+\beta -\alpha m (1+\gamma)) z + \beta -\alpha m \gamma}.
\end{equation}
To certify stability of the feedback interconnection, we must have $\tilde G(\rho z)$ stable and for all  $|z| = 1$,
\begin{equation}\label{eq:TM}
\Re\left( (1-\rho z^{-1})\bigl((\kappa-1)\tilde G(\rho z)-1\bigr) \right)  < 0.
\end{equation}
Equation \eqref{eq:TM} has a graphical interpretation; that the Nyquist plot of
$F(z)\defeq (1-\rho z^{-1})\bigl((\kappa-1)\tilde G(\rho z)-1\bigr)$ should lie entirely in the left half-plane. 

\paragraph{Graphical design for robustness.}
The frequency-domain condition \eqref{eq:TM} can provide useful intuition for the design of robust accelerated optimization methods. We can visualize different algorithms by choosing the parameters $\alpha,\beta,\gamma$ appropriately in~\eqref{eq:RMMXfun2}.

In Figure~\ref{Fig:nyquist_plots} (left panel), we show the Nyquist plot for the Gradient Method using the sector IQC~\cite{boczar2015exponential,lessard2016analysis}. To this effect, we set $\beta=\gamma=0$ and use either $\alpha=\tfrac{2}{L+m}$ or $\alpha=\tfrac{1}{L}$. As we increase $\rho$, the Nyquist plots become ellipses in the left half-plane. At the fastest certifiable rate (smallest $\rho$), the plots become vertical lines. When $\alpha=\tfrac{2}{L+m}$, the vertical line coincides with the imaginary axis, whereas when $\alpha=\tfrac{1}{L}$, the vertical line is shifted left. This result confirms our intuition that since the imaginary axis is the stability boundary, \textit{robust} stability is achieved as the Nyquist contour moves further left, away from the boundary.

The Robust Momentum Method~\eqref{algo} was designed such that the Nyquist diagram forms a vertical line passing through the point $(-\nu,0)$. In other words, we solved for $(\alpha, \beta, \gamma)$ such that~\eqref{eq:TM} holds with the right-hand side replaced by $-\nu$. Constraining the Nyquist plot as such directly leads to the choice~\eqref{eq:params} with $\nu$ related to $\rho$ via~\eqref{eq:defc}.
In Figure~\ref{Fig:nyquist_plots} (right panel), we show the Nyquist plot for the Robust Momentum Method using the Zames--Falb IQC (for $\nu=0$ and $\nu=\tfrac{1}{2}$). We also show Nyquist plots that certify a convergence rate of $\rho$ that is larger than the corresponding algorithm parameter. This leads to ellipses as with the Gradient Method. Note that although the RMM and GM plots look similar, the RMM $\rho$-values are generally smaller due to acceleration.
In contrast, the FGM (center panel) does not produce a vertical line in the Nyquist plot but still touches the stability boundary at the optimal $\rho$.

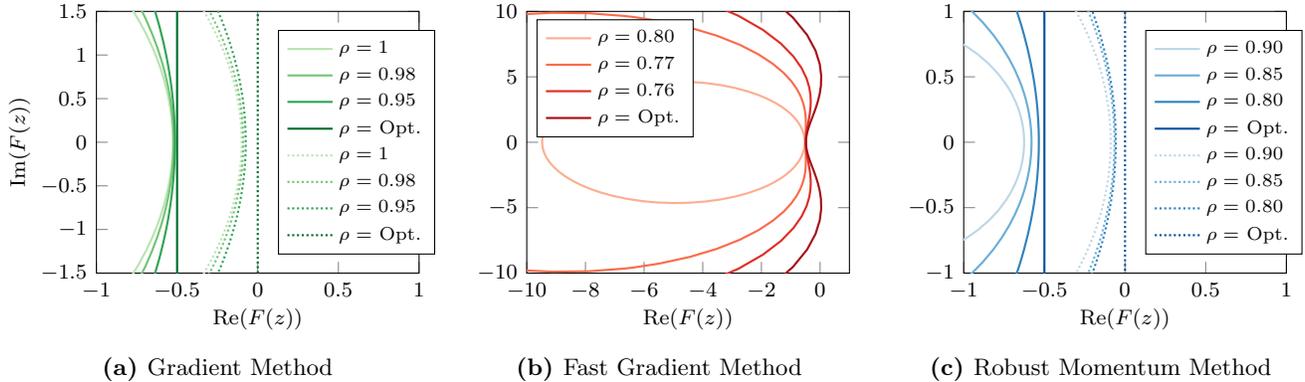
\begin{figure*}[htb]
	\begin{subfigure}{0.33\textwidth}
		\centering
		\tikzsetnextfilename{Nyquist_curves1}
		\begin{tikzpicture}
		\pgfplotsset{
			cycle list/Greens,
			cycle multiindex* list={
				mark=none\nextlist
				thick\nextlist
				Greens-E,Greens-G,Greens-I,Greens-K\nextlist
				solid,solid,solid,solid,densely dotted,densely dotted,densely dotted,densely dotted\nextlist}
		}
		\begin{axis}[
		width=\textwidth,
		y post scale=1,
		xmin =-1,xmax= 1,
		ymin =-1.5,ymax=1.5,
		xlabel={$\Re(F(z))$},
		ylabel={$\Im(F(z))$},
		xlabel shift = -1mm,
		label style={font=\footnotesize},
		xtick={-1,-0.5,0,0.5,1},
		ytick={-1.5,-1,-0.5,0,0.5,1,1.5},
		ylabel shift= -1mm,
		legend cell align=left,
		legend style={font=\scriptsize},
		legend style={at={(1.05,0.5)},anchor=east},
		ticklabel style={font=\footnotesize},
		legend entries={$\rho = 1$,$\rho = 0.98$,$\rho = 0.95$,$\rho =$ Opt.,$\rho = 1$,$\rho = 0.98$,$\rho = 0.95$,$\rho =$ Opt.}
		]
		\addplot table [x index=0,y index=1,header=false] {data/NyGD1.dat};
		\addplot table [x index=0,y index=1,header=false] {data/NyGD2.dat};
		\addplot table [x index=0,y index=1,header=false] {data/NyGD3.dat};
		\addplot table [x index=0,y index=1,header=false] {data/NyGD4.dat};	
		\addplot table [x index=0,y index=1,header=false] {data/NyGDa1.dat}; 
		\addplot table [x index=0,y index=1,header=false] {data/NyGDa2.dat}; 
		\addplot table [x index=0,y index=1,header=false] {data/NyGDa3.dat}; 
		\addplot table [x index=0,y index=1,header=false] {data/NyGDa4.dat};
		\end{axis}
		\end{tikzpicture}
		\caption{Gradient Method}
	\end{subfigure}%
	\begin{subfigure}{0.33\textwidth}
		\centering
		\tikzsetnextfilename{Nyquist_curves2}
		\begin{tikzpicture}
		\pgfplotsset{
			cycle list/Reds,
			cycle multiindex* list={
				mark=none\nextlist
				thick\nextlist
				Reds-E,Reds-G,Reds-I,Reds-K\nextlist
				solid\nextlist}
		}
		\begin{axis}[
		width=\textwidth,
		y post scale=1,
		xmin=-10,xmax= 1,
		ymin=-10,ymax=10,
		xlabel={$\Re(F(z))$},
		xlabel shift = -1mm,
		ylabel shift = -4mm,
		xtick={-10,-8,-6,-4,-2,0},
		ytick={-10,-5,0,5,10},
		legend cell align=left,
		legend style={font=\scriptsize},
		label style={font=\footnotesize},
		legend pos=north west,
		ticklabel style={font=\footnotesize},
		legend entries={$\rho = 0.80$,$\rho = 0.77$,$\rho = 0.76$,$\rho =$ Opt.}
		]
		\addplot table [x index=0,y index=1,header=false] {data/NyNest1.dat};
		\addplot table [x index=0,y index=1,header=false] {data/NyNest2.dat};
		\addplot table [x index=0,y index=1,header=false] {data/NyNest3.dat};
		\addplot table [x index=0,y index=1,header=false] {data/NyNest4.dat};
		\end{axis}
		\end{tikzpicture}
		\caption{Fast Gradient Method}
	\end{subfigure}%
	\begin{subfigure}{0.33\textwidth}
		\centering
		\tikzsetnextfilename{Nyquist_curves3}
		\begin{tikzpicture}
		\pgfplotsset{
			cycle list/Blues,
			cycle multiindex* list={
				mark=none\nextlist
				thick\nextlist
				Blues-E,Blues-G,Blues-I,Blues-K\nextlist
				solid,solid,solid,solid,densely dotted,densely dotted,densely dotted,densely dotted\nextlist}
		}
		\begin{axis}[
		width=\textwidth,
		y post scale=1,
		xmin=-1,xmax= 1,
		ymin=-1,ymax=1,
		xlabel={$\Re(F(z))$},
		xlabel shift = -1mm,
		ylabel shift = -3mm,
		xtick={-1,-0.5,0,0.5,1},
		ytick={-1,-0.5,0,0.5,1},
		legend cell align=left,
		legend style={font=\scriptsize},
		label style={font=\footnotesize},
		legend style={at={(1.05,0.5)},anchor=east},
		ticklabel style={font=\footnotesize},
		legend entries={$\rho = 0.90$,$\rho = 0.85$,$\rho = 0.80$,$\rho =$ Opt.,$\rho = 0.90$,$\rho = 0.85$,$\rho = 0.80$,$\rho =$ Opt.}
		]
		\addplot table [x index=0,y index=1,header=false] {data/NyRMMa1.dat};
		\addplot table [x index=0,y index=1,header=false]{data/NyRMMa2.dat};
		\addplot table [x index=0,y index=1,header=false]{data/NyRMMa3.dat};
		\addplot table [x index=0,y index=1,header=false]{data/NyRMMa4.dat};
		\addplot table [x index=0,y index=1,header=false] {data/NyRMM1.dat}; 
		\addplot table [x index=0,y index=1,header=false]{data/NyRMM2.dat}; 
		\addplot table [x index=0,y index=1,header=false]{data/NyRMM3.dat}; 
		\addplot table [x index=0,y index=1,header=false]{data/NyRMM4.dat};
		\end{axis}
		\end{tikzpicture}
		\caption{Robust Momentum Method}
	\end{subfigure}
	\caption{Frequency-domain plots of various algorithms for $\kappa=10$ and different values of the convergence rate $\rho$. The system is stable if the entire curve lies in the left half-plane. \textbf{(a)}~Gradient Method for $\alpha=1/L$ (solid) and $\alpha=2/(L+m)$ (dashed). The latter is right on the stability boundary while the former is shifted left (more robust). \textbf{(b)}~Fast Gradient Method. \textbf{(c)}~Robust Momentum Method for $\nu=1/2$ (solid) and $\nu=0$ (dashed). Again, the latter is right on the stability boundary while the former is shifted left (more robust).}
	\label{Fig:nyquist_plots}
\end{figure*}

\paragraph{Further robustness interpretations.}
The parameter $\nu$ can be interpreted as the \textit{input feed-forward passivity index} (IFP)~\cite{bao2007process}, which is a measure of the shortage or excess of passivity of the system $F(z)$ defined above. In the frequency domain, the discrete-time definition of the IFP index is given by\footnote{Most sources use a negative feedback convention. The definition we give in~\eqref{eq:IFPfre} uses the positive feedback convention.}
\begin{equation}\label{eq:IFPfre}
\nu(F(z)) \defeq-\tfrac{1}{2} \max_{|z|=1}\, \lambda_\textup{max} \bigl(F(z)+F(z)^*\bigr),
\end{equation}
where $\lambda_\textup{max}(\cdot)$ denotes the largest eigenvalue and $F^*$ is the conjugate transpose of $F$. 
For the SISO case, \eqref{eq:IFPfre} reduces to $\nu=-\max_{|z|=1}\Re(F(z))$, which is the shortest distance between each curve and the imaginary axis in Figure~\ref{Fig:nyquist_plots}.

We can also interpret $\nu$ as a robustness margin in the time domain using the Lyapunov function defined in~\eqref{eq:defc}. In the proof of Theorem~\ref{thm:main}, when we substitute the definition for $V_k$ into~\eqref{eq:diss1}, we obtain
\[
V_{k+1} \le \rho^2\, V_k - \nu\,\norm{\dg(y_k)}^2.
\]
Proving the desired rate bound only requires~\eqref{lyap} to hold, so the term $\nu\,\norm{\dg(y_k)}^2$ can be interpreted as an additional margin that ensures the inequality $V_{k+1}\le \rho^2 V_k$ will hold even if underlying assumptions such as exactness in gradient evaluations or accurate knowledge of $L$ and $m$ are violated. As we increase $\rho$, the linear rate becomes slower, but $\nu$ also increases via~\eqref{eq:defc}, which serves to increase the robustness margin in the inequality~\eqref{lyap}.


\section{Robustness to gradient noise}\label{sec:simulations}
The Robust Momentum Method has a single parameter, which can be used to tune the performance. In this section, we provide both simulations and numerical rate analyses to verify the performance of the algorithm when the gradient is subject to relative deterministic noise~\cite{polyak1987introduction}. Specifically, we will suppose that instead of measuring the gradient $\df(y_k)$, we measure $u_k = \df(y_k) + r_k$ where $r_k\in\R^n$ satisfies $\norm{r_k} \leq \delta\,\norm{\df(y_k)}$. For a given fixed $\delta \ge 0$, we will bound the worst-case performance of the algorithm over all  $f\in\mathcal{F}(m,L)$ and feasible $\{r_k\}$.

\paragraph{Numerical rate analysis.}
To find the worst-case performance, we adopt the methodology from~\cite[Eq. 5.1]{lessard2016analysis}. There, the authors formulate a linear matrix inequality parameterized by $\hat \rho$ and $\delta$ whose feasibility provides a sufficient condition for convergence with linear rate~$\hat \rho$. 

In Figure~\ref{Fig:rho_delta}, we plot the computed convergence rate as a function of noise strength $\delta$ for the Gradient Method, Fast Gradient Method, and Robust Momentum Method.
Note that the worst-case rate in closed form for the Gradient Method is given in \cite{de2017onworst,2017arXiv170905191D}.

\begin{figure}[htb]	
	\tikzsetnextfilename{tradeoff_curves}
	\begin{tikzpicture}
	\begin{axis}[
	width=0.96\linewidth,
	y post scale=1.1,
	xmin=0,xmax=1,
	ymin=0.683772,ymax=1,
	xlabel={Noise strength ($\delta$)},
	ylabel={Upper bound on worst-case rate ($\hat \rho$)},
	ylabel shift = -2mm,
	ytick={0.683772,0.818182,0.9,1},
	yticklabels={$1-\dfrac{1}{\sqrt{\kappa}}$,$\dfrac{\kappa-1}{\kappa+1}$,$1-\dfrac{1}{\kappa}$,1},
	legend cell align=left,
	legend style={font=\scriptsize},
	label style={font=\footnotesize},
	legend pos=south east,
	ticklabel style={font=\footnotesize}]
	\addplot [Set1-C, thick, densely dashed] table [x index=0,y index=1,header=false] {data/rho_delta_kappa10.dat};
	\addlegendentry{GM ($\alpha=1/L$)}
	\addplot [Set1-C, thick, densely dotted] table [x index=0,y index=2,header=false] {data/rho_delta_kappa10.dat};
	\addlegendentry{GM ($\alpha=2/(L+m)$)}
	\addplot [Set1-C, very thick, solid] table [x index=0,y index=3,header=false] {data/rho_delta_kappa10.dat};
	\addlegendentry{GM (min $\alpha\in[0,2/L]$)}
	\addplot [Set1-A, very thick, solid] table [x index=0,y index=4,header=false] {data/rho_delta_kappa10.dat};
	\addlegendentry{FGM}
	\addplot [Set1-B, thick, densely dashed] table [x index=0,y index=5,header=false] {data/rho_delta_kappa10.dat};
	\addlegendentry{RMM ($\nu=0$)}
	\addplot [Set1-B, thick, dash dot] table [x index=0,y index=6,header=false] {data/rho_delta_kappa10.dat};
	\addlegendentry{RMM ($\nu=1/3$)}
	\addplot [Set1-B, thick, densely dotted] table [x index=0,y index=7,header=false] {data/rho_delta_kappa10.dat};
	\addlegendentry{RMM ($\nu=2/3$)}
	\addplot [Set1-B, very thick, solid] table [x index=0,y index=9,header=false] {data/rho_delta_kappa10.dat};
	\addlegendentry{RMM (min $\nu\in[0,1)$)}
	\end{axis}
	\end{tikzpicture}
	\caption{Upper bound on the worst-case linear convergence rate as a function of the noise level $\delta$ for $\kappa=10$ (the figure looks similar for other choices of $\kappa$). We used a relative noise model, where the measured gradient $u_k$ satisfies $\norm{u_k-\df(y_k)}\leq\delta\,\norm{\df(y_k)}$ for the Gradient Method (GM), Fast Gradient Method (FGM), and Robust Momentum Method (RMM). By tuning the parameter $\nu$, the RMM trades off robustness to gradient noise with convergence rate.}
	\label{Fig:rho_delta}
\end{figure}
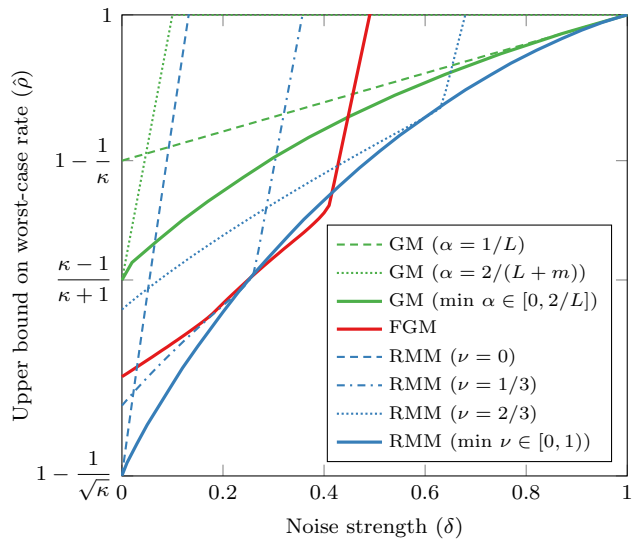

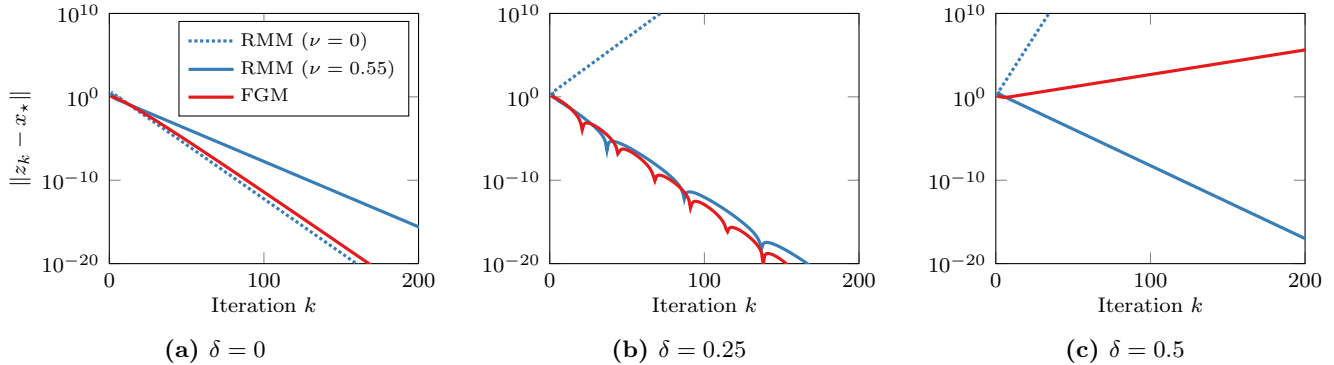
\begin{figure*}[htb]
	\begin{subfigure}{0.32\textwidth}
		\centering
		\tikzsetnextfilename{algo_simulation1}
		\begin{tikzpicture}
		\begin{semilogyaxis}[
		width=\textwidth,
		y post scale=1,
		xmin=0,xmax=200,
		ymin=1e-20,ymax=1e10,
		xlabel={Iteration $k$},
		ylabel={$\|z_k - x_\star\|$},
		ylabel shift = -1mm,
		xlabel shift = -1mm,
		xtick={0,100,200,300},
		ytick={1e-20,1e-10,1e0,1e10},
		legend cell align=left,
		legend style={font=\scriptsize},
		legend pos=north east,
		ticklabel style={font=\footnotesize},
		label style={font=\footnotesize},
		solid]
		\addplot [Set1-B, very thick, densely dotted] table [x index=0,y index=1,header=false] {data/Simulation_noisy_delta1.dat}; \addlegendentry{RMM ($\nu = 0$)}
		\addplot [Set1-B, very thick] table [x index=0,y index=2,header=false] {data/Simulation_noisy_delta1.dat}; \addlegendentry{RMM ($\nu = 0.55$)}
		\addplot [Set1-A, very thick] table [x index=0,y index=3,header=false] {data/Simulation_noisy_delta1.dat}; \addlegendentry{FGM}
		\end{semilogyaxis}
		\end{tikzpicture}
		\caption{$\delta = 0$}
	\end{subfigure}\hfill\hfill%
	\begin{subfigure}{0.32\textwidth}
		\centering
		\tikzsetnextfilename{algo_simulation2}
		\begin{tikzpicture}
		\begin{semilogyaxis}[
		width=\textwidth,
		y post scale=1,
		xmin=0,xmax=200,
		ymin=1e-20,ymax=1e10,
		xlabel={Iteration $k$},
		xlabel shift = -1mm,
		xtick={0,100,200,300},
		ytick={1e-20,1e-10,1e0,1e10},
		legend cell align=left,
		legend pos=north east,
		ticklabel style={font=\footnotesize},
		label style={font=\footnotesize},
		solid]
		\addplot [Set1-B, very thick, densely dotted] table [x index=0,y index=1,header=false] {data/Simulation_noisy_delta2.dat}; 
		\addplot [Set1-B, very thick] table [x index=0,y index=2,header=false] {data/Simulation_noisy_delta2.dat};
		\addplot [Set1-A, very thick] table [x index=0,y index=3,header=false] {data/Simulation_noisy_delta2.dat};
		\end{semilogyaxis}
		\end{tikzpicture}
		\caption{$\delta = 0.25$}
	\end{subfigure}\hfill%
	\begin{subfigure}{0.32\textwidth}
		\centering
		\tikzsetnextfilename{algo_simulation3}
		\begin{tikzpicture}
		\begin{semilogyaxis}[
		width=\textwidth,
		y post scale=1,
		xmin=0,xmax=200,
		ymin=1e-20,ymax=1e10,
		xlabel={Iteration $k$},
		xlabel shift = -1mm,
		xtick={0,100,200,300},
		ytick={1e-20,1e-10,1e0,1e10},
		legend cell align=left,
		legend pos=north east,
		ticklabel style={font=\footnotesize},
		label style={font=\footnotesize},
		solid]
		\addplot [Set1-B, very thick, densely dotted] table [x index=0,y index=1,header=false] {data/Simulation_noisy_delta3.dat};
		\addplot [Set1-B, very thick] table [x index=0,y index=2,header=false] {data/Simulation_noisy_delta3.dat};
		\addplot [Set1-A, very thick] table [x index=0,y index=3,header=false] {data/Simulation_noisy_delta3.dat};
		\end{semilogyaxis}
		\end{tikzpicture}
		\caption{$\delta = 0.5$}
	\end{subfigure}
	\caption{Simulation of the Robust Momentum Method (RMM) and the Fast Gradient Method (FGM) with relative gradient noise of strength $\delta$ and condition ratio $\kappa=10$. The objective function is the two-dimensional quadratic with gradient~\eqref{Eq:gradf}. The measured gradient at each iteration is $u_k = (1-\delta)\df(y_k)$. \textbf{(a)}~With no noise, all methods are stable and the RMM with $\nu=0$ is the fastest. \textbf{(b)}~With more noise, the RMM with $\nu=0$, the most fragile possible tuning, is unstable. \textbf{(c)}~With high noise, only the RMM with $\nu=0.55$ remains stable. Even FGM is unstable with this much noise.}
	\label{Fig:Simulation}
\end{figure*}

First, consider the Robust Momentum Method. When $\nu=0$ and there is no gradient noise ($\delta=0$), the method achieves the fast convergence rate $1-1/\sqrt{\kappa}$. Increasing the noise level above $\delta>0.13$, however, leads to a loss of convergence guarantee. As we increase $\nu$, the convergence rate becomes slower but the method is capable of tolerating larger noise levels. In the limiting case as $\nu= 1-\frac{1}{2\kappa}$ the Robust Momentum Method becomes the Gradient Method with $\alpha=\frac{1}{L}$ (dashed black line).

It is interesting to note that the Fast Gradient Method has a faster convergence bound than the Robust Momentum Method for noise levels $0.26<\delta<0.41$. However, the Fast Gradient Method is also unstable for $\delta>0.5$ while the Robust Momentum Method can be tuned so that it converges with noise levels up to $\delta\to 1$.

\paragraph{Numerical simulations.}
To illustrate the noise robustness properties of different tunings of the Robust Momentum Method, we compared it to the Fast Gradient Method when applied to a simple two-dimensional quadratic function. We used the gradient
\begin{align} \label{Eq:gradf}
\df(y_k) &= \begin{bmatrix} m & 0 \\ 0 & L \end{bmatrix} (y_k-x_\star)
\end{align}
where the gradient noise is $r_k = -\delta\,\df(y_k)$.
See Figure~\ref{Fig:Simulation}.
The RMM with $\nu=0$ has the fastest convergence rate in the noiseless case ($\delta=0$), but quickly diverges when noise is present. The FGM is more robust to noise, but also diverges when the noise magnitude $\delta$ is too large. The RMM with $\nu=0.55$ remains stable for large amounts of noise, although in the absence of noise the convergence rate is slower than both other methods.

\bibliographystyle{IEEEtran}
\begin{small}
\bibliography{references}
\end{small}

\end{document}